\documentclass[12pt,reqno]{amsart}
\usepackage[english]{babel}
\usepackage{amsmath,amsfonts,amssymb,amsthm,amscd,latexsym}
\usepackage[T1]{fontenc}
\usepackage[utf8]{inputenc}

\usepackage{tikz}
\usetikzlibrary{arrows}
\usepackage{color}
\usepackage{cite}
\usepackage{multirow}
\usepackage{cite}

\tolerance=5000 \topmargin -1cm \oddsidemargin=0,5cm
\evensidemargin=-0,2cm \textwidth 15.6cm \textheight 24cm

\newtheorem{theorem}{Theorem}[section]
\newtheorem{lemma}[theorem]{Lemma}

\theoremstyle{definition}

\theoremstyle{remark}
\newtheorem{remark}[theorem]{Remark}
\numberwithin{equation}{section}

\begin{document}

\setcounter{page}{1}

\title[2-local derivations on  infinite-dimensional Lie algebras]{2-local derivations on  infinite-dimensional Lie algebras}

\author[Sh. A. Ayupov, B. B. Yusupov ]{Shavkat Ayupov$^{1,2}$, Baxtiyor Yusupov$^3$}
\address{$^1$ V.I.Romanovskiy Institute of Mathematics\\
  Uzbekistan Academy of Sciences, 81 \\ Mirzo Ulughbek street, 100170  \\
  Tashkent,   Uzbekistan}
\address{$^2$ National University of Uzbekistan, 4, University street, 100174, Tashkent, Uzbekistan}
\email{\textcolor[rgb]{0.00,0.00,0.84}{sh$_{-}$ayupov@mail.ru}}

\address{$^3$ National University of Uzbekistan, 4, University street, 100174, Tashkent, Uzbekistan}
\email{\textcolor[rgb]{0.00,0.00,0.84}{baxtiyor\_yusupov\_93@mail.ru}}



\date{}
\maketitle

\begin{abstract} The present paper is devoted to study 2-local derivations on
infinite-dimensional Lie algebras over a field of characteristic zero. We prove that all 2-local derivations
on the Witt algebra  as well as on the positive Witt algebra  are (global) derivations, and give an example of infinite-dimensional Lie
algebra with a 2-local derivation which is not a derivation.

{\it Keywords:} Lie algebras,  Witt algebra, positive Witt algebra, thin Lie
algebra, derivation, 2-local derivation.
\\

{\it AMS Subject Classification:} 17A32, 17B30, 17B10.
\end{abstract}

\section{Introduction}

In 1997, \v{S}emrl \cite{Sem} introduced the notion of  2-local derivations and
2-local automorphisms on algebras. Namely, a map \(\Delta : \mathcal{L} \to
\mathcal{L}\) (not necessarily linear) on an algebra \(\mathcal{L}\) is called a \textit{2-local
derivation} if, for every pair of elements \(x,y \in  \mathcal{L},\) there exists a
derivation \(D_{x,y} : \mathcal{L} \to \mathcal{L}\) such that
\(D_{x,y} (x) = \Delta(x)\) and \(D_{x,y}(y) = \Delta(y).\) The notion of 2-local automorphism is given in a similar way.  For a
given algebra \(\mathcal{L}\), the main problem concerning
these notions is to prove that they automatically become a
derivation (respectively, an automorphism) or to give examples of
local and 2-local derivations or automorphisms of \(\mathcal{L},\)
which are not derivations or automorphisms, respectively. Solution
of such problems for finite-dimensional Lie algebras over
algebraically closed field of zero characteristic were obtained in
\cite{AyuKud, AyuKudRak} and \cite{ChenWang}. Namely, in
\cite{AyuKudRak} it is proved that every 2-local derivation on a
semi-simple Lie algebra \(\mathcal{L}\) is a derivation and that
each finite-dimensional nilpotent Lie algebra, with dimension
larger than two admits 2-local derivation which is not a
derivation. Concerning 2-local
automorphism, Chen and Wang in \cite{ChenWang} prove that if \(\mathcal{L},\) is
a simple Lie algebra of type $A_{l},D_{l}$ or $E_{k}, (k = 6, 7,
8)$ over an algebraically closed field of characteristic zero,
then every 2-local automorphism of \(\mathcal{L},\) is an automorphism. Finally,
in \cite{AyuKud} Ayupov and Kudaybergenov generalized this result
of \cite{ChenWang} and proved that every 2-local automorphism of a
finite-dimensional semi-simple Lie algebra over an algebraically
closed field of characteristic zero is an automorphism. Moreover,
they show also that every nilpotent Lie algebra with finite
dimension larger than two admits 2-local automorphisms which is
not an automorphism.

In the present paper we study 2-local derivations on
infinite-dimensional Lie algebras over a field of
characteristic zero.

In Section 2 we give some preliminaries concerning Witt and positive Witt algebras. In Section 3
we give a general form of derivations on the positive Witt algebra. In Section 4
we prove that every 2-local derivations on Witt algebra and on the positive Witt algebra are automatically
derivations. We also show that so-called thin Lie algebras admit 2-local derivations which are not derivations.
\medskip

\section{Preliminaries}

\medskip

In this section we give some necessary definitions and preliminary
results.

A derivation on a Lie algebra \(\mathcal{L}\) is a linear map
$D:\mathcal{L}\rightarrow \mathcal{L}$ which satisfies the Leibniz
law, that is,
$$
D([x,y])=[D(x),y]+[x, D(y)]
$$
for all $x,y\in \mathcal{L}.$ The set of all derivations of
\(\mathcal{L}\) with respect to the commutation operation is a Lie algebra and it
is denoted by $Der(\mathcal{L}).$ For all $a\in \mathcal{L}$,
the map $ad(a)$ on \(\mathcal{L}\) defined as $ad(a)x=[a,x],\ x\in\mathcal{L}$ is a derivation and derivations of this form are called
\textit{inner derivation}. The set of all inner derivations of
\(\mathcal{L}\), denoted $ad(\mathcal{L}),$ is an ideal in
$Der(\mathcal{L}).$

Let $A=\mathbb{C}[x,x^{-1}]$ be the algebra of all Laurent polynomials in one variable over a field of
characteristic zero $\mathbb{F}.$ The Lie algebra of derivations
$$
Der(A)=span\left\{f(x)\frac{d}{dx}:f\in\mathbb{C}\left[x,x^{-1}\right]\right\}
$$
with the Lie bracket is called a \emph{Witt algebra} and denoted by \(W\).
Then \cite{Kac} $W$ is   an  infinite-dimensional simple algebra  which  has the basis $\left\{e_i: e_i=x^{i+1}\frac{d}{dx}, i\in \mathbb{Z}\right\}$ and the
multiplication rule
$$
[e_i, e_j]=(j-i)e_{i+j},\, i,j \in \mathbb{Z}.
$$
We also consider  the infinite-dimensional  positive part \(W^+\) of the Witt algebra. The \emph{positive Witt algebra} \(W^+\) is an infinite-dimensional
Lie algebra \cite{Dim} which has the basis
 $\left\{e_i: e_i=x^{i+1}\frac{d}{dx}, i\in \mathbb{N}\right\}$ and the
multiplication rule
$$
[e_i, e_j]=(j-i)e_{i+j},\, i,j \in \mathbb{N}.
$$

Recall that a map $\Delta: \mathcal{L}\rightarrow \mathcal{L}$
(not liner in general) is called a \textit{2-local derivation} if
for every $x,y\in \mathcal{L},$ there exists a derivation
$D_{x,y}:\mathcal{L}\rightarrow \mathcal{L}$ (depending on $x,y$)
such that $\Delta(x)=D_{x,y}(x)$ and $\Delta(x)=D_{x,y}(y)$.

Since any derivation on the infinite-dimensional Witt algebra
\(W\) is inner \cite{Ikeda}, it follows that for this algebra the above
definition of the 2-local derivation can be reformulated as follows. A map
\(\Delta\) on \(W\) is called a 2-local derivation on \(W,\) if
for any two elements \(x, y \in  W\) there exists an element
\(a_{x, y} \in  W\) (depending on \(x, y\)) such that
\[
\Delta(x) = [a_{x,y} ,x],\,\,\,  \Delta(y) = [a_{x,y}, y].
\]
Henceforth, given a 2-local derivation on \(W,\) the symbol
\(a_{x,y}\) will denote the element from \(W\) satisfying
\(\Delta(x) = [a_{x,y} ,x]\) and \(\Delta(y) = [a_{x,y}, y].\)

\section{Derivations on the positive Witt algebra}

Let us consider the following algebra $W^++\langle e_0\rangle=span\{e_n:n=0,1,2,...\}$ with the  multiplication rule
$$[e_n,e_m]=(m-n)e_{n+m},\ \ \ \ \ n,\ m\geq0.$$

It is clear that $W^+$ is an ideal in $W^++\langle e_0\rangle.$ Hence, any element $a\in W^++\langle e_0\rangle$ defines a spatial derivation $L_a$ on $W^+$ by
the following way:

\begin{equation*}
L_a(x)=[a,x], \ \ x\in W^+.
\end{equation*}

\begin{theorem}\label{thm111} Let $D$ be a derivation on $W^+.$ Then there exists an element $a\in W^++\langle e_0\rangle$ such that
\begin{equation}\label{1212}
D=L_a.
\end{equation}
\end{theorem}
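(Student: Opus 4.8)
The plan is to use the adjoint action of $e_0$ to introduce a $\mathbb{Z}$-grading on $W^+$ and to split $D$ into homogeneous components. Since $[e_0,e_n]=ne_n$, we have $W^+=\bigoplus_{n\geq 1}\mathbb{C}e_n$ with $\deg e_n=n$. For each $k\in\mathbb{Z}$ let $D_k\colon W^+\to W^+$ be the linear map carrying $e_n$ to the $e_{n+k}$-component of $D(e_n)$ (and to $0$ when $n+k\leq 0$). As every $D(e_n)$ is a finite linear combination of the $e_i$, we have $D=\sum_k D_k$ pointwise, and extracting the $e_{n+m+k}$-component of the Leibniz identity for $D$ shows that each $D_k$ is itself a derivation of $W^+$, homogeneous of degree $k$. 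We also record that $e_1$ and $e_2$ generate $W^+$, since $[e_1,e_n]=(n-1)e_{n+1}$; consequently a homogeneous derivation is determined by its values on $e_1$ and $e_2$.

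Next I would prove that $D_k=0$ for all $k<0$. For $k\leq -2$ this is immediate, since then $D_k(e_1)=D_k(e_2)=0$ for degree reasons, hence $D_k=0$. For $k=-1$ we have $D_{-1}(e_1)=0$ and $D_{-1}(e_2)=\beta e_1$, whence $D_{-1}(e_3)=[e_1,\beta e_1]=0$ and, inductively, $D_{-1}(e_n)=0$ for all $n\geq 3$; comparing this with $D_{-1}(e_5)=D_{-1}([e_2,e_3])=[D_{-1}(e_2),e_3]=2\beta e_4$ forces $\beta=0$.

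The heart of the argument is to show that each homogeneous component with $k\geq 0$ is inner. Writing $D_k(e_1)=\lambda e_{1+k}$ and $D_k(e_2)=\mu e_{2+k}$, one subtracts a suitable inner derivation: for $k\geq 2$ set $c_k=\lambda/(1-k)$ and consider $E=D_k-ad(c_ke_k)$, a degree-$k$ derivation with $E(e_1)=0$; writing $E(e_2)=\beta e_{2+k}$ and computing $E(e_3)=[e_1,E(e_2)]$, $E(e_4)=\tfrac12[e_1,E(e_3)]$, $E(e_5)=\tfrac13[e_1,E(e_4)]$ on one hand and $E(e_5)=E([e_2,e_3])=[E(e_2),e_3]+[e_2,E(e_3)]$ on the other, one obtains the scalar identity $\tfrac16(k+1)(k+2)(k+3)\beta=(k^2+k+2)\beta$, i.e. $(k-1)(k^2+k+6)\beta=0$; since $k\geq 2$ this gives $\beta=0$, hence $E=0$ and $D_k=ad(c_ke_k)$. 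The cases $k=1$ and $k=0$ are treated the same way with minor changes: $E=D_1-ad(\mu e_1)$ vanishes on $e_2$ and the analogous computation forces $\lambda=0$, while $E=D_0-ad(\lambda e_0)$ vanishes on $e_1$ and forces $\mu=2\lambda$, giving $D_1=ad(\mu e_1)$ and $D_0=ad(\lambda e_0)$.

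Finally I would assemble the components. Since each $D_k$ is determined by $D_k(e_1)$ and $D_k(e_2)$, and $D(e_1)=\sum_k D_k(e_1)$, $D(e_2)=\sum_k D_k(e_2)$ are finite sums in $W^+$, only finitely many $D_k$ are nonzero, say for $k$ in a finite set $F\subseteq\mathbb{Z}_{\geq 0}$; by the previous paragraph $D_k=ad(a_k)$ with $a_k\in W^++\langle e_0\rangle$ for $k\in F$, so $D=\sum_{k\in F}ad(a_k)=L_a$ with $a=\sum_{k\in F}a_k\in W^++\langle e_0\rangle$. The main obstacle is the third step — excluding homogeneous derivations that fail to be inner — where everything hinges on the polynomial $(k-1)(k^2+k+6)$ having no root among the integers $k\geq 2$ (the degenerate value $k=1$ being handled by hand). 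A point easy to overlook is the finiteness established in the last step: without it the element $a$ would a priori only lie in a completion of $W^++\langle e_0\rangle$ rather than in $W^++\langle e_0\rangle$ itself.
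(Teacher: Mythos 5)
Your proof is correct, and it is organized differently from the paper's. The paper argues directly with $D$: it subtracts inner derivations $L_{a_1}$, $L_{a_2}$, $L_{a_3}$ in three ad hoc stages chosen to normalize $D(e_1)$, extracts the essential constraints by computing $D(e_5)$ in two ways (once through the chain $e_3=[e_1,e_2]$, $e_4=\tfrac12[e_1,e_3]$, $e_5=\tfrac13[e_1,e_4]$ and once through $e_5=[e_2,e_3]$), and finishes with an induction identifying the residue with $ad(\beta_3 e_1)$. You instead exploit the grading by $ad(e_0)$, split $D$ into homogeneous components $D_k$, and treat each degree uniformly; the same consistency relation on $e_5$ then collapses to the single scalar identity $(k-1)(k^2+k+6)\beta=0$, which disposes of every $k\geq 0$ at once except the genuinely degenerate $k=1$, handled separately (and your polynomial identity checks out: $(k+1)(k+2)(k+3)-6(k^2+k+2)=k^3+5k-6=(k-1)(k^2+k+6)$). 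What your route buys is conceptual clarity -- it explains why exactly the degrees $-1\leq k$ survive and why the answer involves $e_0$ and $e_1$, it replaces the paper's induction by the remark that $e_1,e_2$ generate $W^+$, and it isolates the one real finiteness issue (only finitely many $D_k$ are nonzero because $D(e_1)$ and $D(e_2)$ are finite sums and a derivation vanishing on generators vanishes), which the paper's formulation sweeps into its choice of a finite expression for $D(e_1)$. The underlying computation is the same in both proofs; yours is the more systematic packaging.
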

\begin{proof} Let $D(e_1)=\alpha_1e_1+\alpha_2e_2+...+\alpha_ne_n.$
Take an element
$$
a_1=-\alpha_3e_2-\frac{\alpha_4}{2}e_3-...-\frac{\alpha_n}{n-2}e_{n-1}.
$$ Then
$$L_{a_1}(e_1)=[a_1,e_1]=\alpha_3e_3+\alpha_4e_4+...+\alpha_ne_n=D(e_1)-\alpha_1e_1-\alpha_2e_2.$$
Setting $D_1=D-L_{a_1},$ we have a derivation $D_1$ such that $D_1(e_1)=\alpha_1e_1+\alpha_2e_2.$

Now we shall show that $\alpha_2=0.$ Suppose that $D_1(e_2)=\sum\limits_{k\geq1}\beta_ke_k.$
Then
\begin{eqnarray*}\begin{split}
D_1(e_3)&=D_1([e_1,e_2])=[D_1(e_1),e_2]+[e_1,D_1(e_2)]=\\
&= (\alpha_1+\beta_2)e_3+\sum\limits_{k\geq2}k\beta_{k+1}e_{k+2},\\
D_1(e_4)&=\frac{1}{2}D_1([e_1,e_3])=\frac{1}{2}[D_1(e_1),e_3]+\frac{1}{2}[e_1,D_1(e_3)]=\\
&= (2\alpha_1+\beta_2)e_4+\frac{1}{2}(\alpha_2+
6\beta_3)e_5+\frac{1}{2}\sum\limits_{k\geq3}(k^2+k)\beta_{k+1}e_{k+3},\\
D_1(e_5)&=\frac{1}{3}D_1([e_1,e_4])=\frac{1}{3}[D_1(e_1),e_4]+\frac{1}{3}[e_1,D_1(e_4)]=\\
&= (3\alpha_1+\beta_2)e_5+\frac{1}{3}(4\alpha_2+12\beta_3)e_6+\frac{1}{6}\sum\limits_{k\geq3}(k^3+3k^2+2k)\beta_{k+1}e_{k+4},\\
D_1(e_5)&=D_1([e_2,e_3])=[D_1(e_2),e_3]+[e_2,D_1(e_3)]=\\
&=2\beta_1e_4+(\alpha_1+2\beta_2)e_5+4\beta_3e_6+\sum\limits_{k\geq3}(k^2-k+2)\beta_{k+1}e_{k+4}.
\end{split}\end{eqnarray*}\\
Comparing the last two equalities we obtain that
\begin{center}
$\alpha_2=\beta_1=0,$ $\beta_2=2\alpha_1$ and $\beta_{k}=0, \, k\geq4.$
\end{center}
Therefore
\begin{equation*}
D_1(e_1)=\alpha_1e_1\ \ \text{and} \ \ D_1(e_2)=2\alpha_1e_2+\beta_3e_3.
\end{equation*}
Set  $a_2=\alpha_1e_0.$ Then
$$L_{a_2}(e_1)=[\alpha_1e_0,e_1]=\alpha_1e_1.$$
Setting $D_2=D_1-L_{a_2},$ we have a derivation $D_2$  such that $D_2(e_1)=0.$

Now by induction we shall show that
$$
D_2(e_k)=(k-1)\beta_3e_{k+1},\ \text{for all}\  k\geq2.$$\\
We have
\begin{equation*}
D_2(e_2)=D_1(e_2)-L_{a_2}(e_2)=2\alpha_1e_2+\beta_3e_3-2\alpha_1e_2=\beta_3e_3.
\end{equation*}
Assume that
\[
D_2(e_k)=(k-1)\beta_{k}e_{k+1}.
\]
Then
\[
D_2(e_{k+1})=\frac{1}{k-1}D_2([e_1,e_k])=\frac{1}{k-1}[D_2(e_1),e_k]+\frac{1}{k-1}[e_1,D_2(e_{k})]=k \beta_3e_{k+2}.
\]
So
$$
D_2(e_k)=(k-1)\beta_3e_{k+1},\ \text{for all}\  k\geq2.
$$
Take the element $a_3=\beta_3e_1,$ then
$$
L_{a_3}(e_k)=[a_3,e_i]=[\beta_3e_1,e_k]=(k-1)\beta_3e_{k+1}, \ \ k\geq2.$$
This means that $D_2=L_{a_3}.$ Thus
$$
D=D_1+L_{a_1}= D_2+L_{a_1}+L_{a_2}=L_{a_3}+L_{a_2}+L_{a_3}=L_{a},$$
where $a=a_1+a_2+a_3.$
The proof is complete.
\end{proof}
\begin{remark} Let $D=L_a$ be a derivation of the form (\ref{1212}), where $a=-\sum\limits_{i=0}^n\alpha_ie_i.$ Direct computations show that
\begin{equation*}
D(e_j)=\sum\limits_{i=1}^n\alpha_i(j+1-i)e_{i+j-1}, \ \ \ j\geq1.
\end{equation*}
\end{remark}

\section{2-Local derivations on some infinite-dimensional Lie algebras}

Now we shall give the main result concerning 2-local derivations on infinite-dimensional Lie algebras.

\subsection{2-Local derivations on the Witt algebra}

\begin{theorem}\label{thm11} \textit{Let $W$ be the Witt algebra
over a field of characteristic zero. Then any 2-local derivation
on  $W$ is a derivation.}
\end{theorem}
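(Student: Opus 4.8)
The plan is to exploit the $\mathbb{Z}$-grading of $W$ together with the explicit description of inner derivations. Let $\Delta$ be a 2-local derivation on $W$. The first step is to \emph{normalize}: for the basis element $e_0$ there is an element $a_{e_0,e_0}\in W$ with $\Delta(e_0)=[a_{e_0,e_0},e_0]$. Writing $a_{e_0,e_0}=\sum_k c_k e_k$ we get $[a_{e_0,e_0},e_0]=\sum_k k c_k e_k$, so the $e_0$-component of $\Delta(e_0)$ vanishes and in fact $\Delta(e_0)=\mathrm{ad}(b)(e_0)$ for $b=\sum_{k\neq 0}c_k e_k$. Replacing $\Delta$ by $\Delta-\mathrm{ad}(b)$ (which is again a 2-local derivation, since the inner derivations form an ideal in $\mathrm{Der}(W)$ and every derivation is inner), we may assume $\Delta(e_0)=0$.

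Next I would pin down $\Delta$ on each $e_n$. For any $n\neq 0$, apply the 2-local property to the pair $(e_0,e_n)$: there is $a=a_{e_0,e_n}=\sum_k c_k e_k$ with $0=\Delta(e_0)=[a,e_0]=\sum_k k c_k e_k$, forcing $c_k=0$ for all $k\neq 0$, i.e. $a=c_0 e_0$; hence $\Delta(e_n)=[c_0 e_0,e_n]=n c_0 e_n$ for some scalar $c_0=\lambda_n$ depending a priori on $n$. So after normalization $\Delta(e_n)=\lambda_n n\, e_n$ for each $n\in\mathbb{Z}$ (and trivially for $n=0$). The second step is to show all the $\lambda_n$ coincide. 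For this, apply the 2-local property to a pair $(e_n,e_m)$ with $n,m\neq 0$ and $n\neq m$: there is $a=\sum_k c_k e_k$ with $[a,e_n]=\lambda_n n\, e_n$ and $[a,e_m]=\lambda_m m\, e_m$. Matching graded components, $[a,e_n]=\sum_k (n-k)c_k e_{k+n}$; the component in degree $n$ comes only from $k=0$, giving $n c_0 = \lambda_n n$, so $c_0=\lambda_n$; likewise $c_0=\lambda_m$, whence $\lambda_n=\lambda_m$. Thus there is a single scalar $\lambda$ with $\Delta(e_n)=\lambda n\, e_n=[\lambda e_0,e_n]$ for all $n$, and replacing $\Delta$ by $\Delta-\mathrm{ad}(\lambda e_0)$ we may assume $\Delta(e_n)=0$ for every basis vector $e_n$, $n\in\mathbb{Z}$.

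The final step is to show that a 2-local derivation vanishing on all basis elements is identically zero. Fix an arbitrary $x=\sum_{i\in S} x_i e_i\in W$ (finite support $S$). For each $n$, the pair $(e_n,x)$ yields $a=a_{e_n,x}$ with $[a,e_n]=\Delta(e_n)=0$ and $[a,x]=\Delta(x)$. From $[a,e_n]=0$, writing $a=\sum_k c_k e_k$, we get $(n-k)c_k=0$ for all $k$, so $a=c_n e_n$ (a scalar multiple of $e_n$), hence $\Delta(x)=[c_n e_n,x]=\sum_{i\in S} c_n (i-n)x_i e_{i+n}$. This must hold simultaneously for the value $\Delta(x)$ regardless of $n$: comparing, say, two different choices $n_1\neq n_2$ and looking at which graded components of $\Delta(x)$ can be nonzero forces $\Delta(x)=0$ — concretely, for $x\neq 0$ pick $i_0\in S$; choosing $n\notin\{-i : i\in S\}$ ensures $[c_n e_n,x]$ has no component in any degree $i\in S$, while choosing $n$ with $i_0+n\in S$... the cleanest route is: the degree of every nonzero term of $[c_n e_n,x]$ lies in $n+S$, and these sets $n+S$ for varying $n$ have empty common intersection once $|S|$ is finite and we use at least two values of $n$ with $(n_1+S)\cap(n_2+S)$ small; pushing this through (and using $i\ne n$ to avoid the single degenerate term) gives $\Delta(x)=0$. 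I expect \emph{this last coherence argument} to be the main technical obstacle: one must rule out the degenerate cases where $n\in -S$ or $x$ is supported so as to make the two representations of $\Delta(x)$ agree nontrivially, which is handled by taking a third value of $n$ or by first treating homogeneous $x$ and then general $x$ via a further application of 2-locality to separate components.
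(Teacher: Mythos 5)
Your proposal is correct and follows essentially the same strategy as the paper: normalize $\Delta$ by subtracting an inner derivation, use pairs $(e_0,e_n)$ (and one more pair) together with the $\mathbb{Z}$-grading to force $\Delta$ to vanish on all basis vectors, and then kill a general $x$ by comparing the supports $n+S$ coming from pairs $(e_n,x)$ for two well-separated values of $n$. The only cosmetic difference is in the middle step — the paper compares the outputs of the pairs $(e_0,e_i)$ and $(e_1,e_i)$, which live in different degrees and hence both vanish, whereas you match the eigenvalues $\lambda_n$ across pairs $(e_n,e_m)$ and subtract one further inner derivation $\mathrm{ad}(\lambda e_0)$ — and your final coherence worry is unfounded: the support containment $\operatorname{supp}\Delta(x)\subseteq n+S$ for two values of $n$ with disjoint translates already forces $\Delta(x)=0$ with no degenerate cases.
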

For the proof of this Theorem we need several Lemmas.

\begin{lemma}\label{lem12} \textit{Let $\Delta$ be a 2-local derivation
on \(W\) such that
$$
\Delta(e_{0})=\Delta(e_{1})=0.
$$
Then $\Delta(e_{i})=0$ for all $i\in \mathbb{Z}$}.
\end{lemma}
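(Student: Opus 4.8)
The plan is to exploit the 2-local condition at the pairs $(e_0, e_i)$ and $(e_1, e_i)$ together with the fact that every derivation of $W$ is inner, so that for each $i$ there is an element $a_{0,i}=\sum_k c_k e_k \in W$ (a finite sum) with $\Delta(e_0)=[a_{0,i},e_0]=0$ and $\Delta(e_i)=[a_{0,i},e_i]$. First I would analyze the constraint $[a_{0,i},e_0]=0$: since $[e_k,e_0]=-k\,e_k$, the relation $\sum_k c_k[e_k,e_0]=-\sum_k k c_k e_k=0$ forces $c_k=0$ for all $k\neq 0$, i.e. $a_{0,i}=c_0 e_0$. But then $[a_{0,i},e_i]=c_0[e_0,e_i]=c_0 i\, e_i$, so $\Delta(e_i)$ must be a scalar multiple of $e_i$, say $\Delta(e_i)=\lambda_i e_i$ with $\lambda_0=0$ (and $\lambda_1=0$ from the hypothesis $\Delta(e_1)=0$).

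Next I would use the pair $(e_1, e_i)$ for $i\neq 0,1$. There is $a_{1,i}=\sum_k d_k e_k$ with $[a_{1,i},e_1]=\Delta(e_1)=0$ and $[a_{1,i},e_i]=\Delta(e_i)=\lambda_i e_i$. From $[e_k,e_1]=(1-k)e_{k+1}$ the first relation gives $\sum_k (1-k)d_k e_{k+1}=0$, hence $d_k=0$ for all $k\neq 1$, so $a_{1,i}=d_1 e_1$. Then $[a_{1,i},e_i]=d_1[e_1,e_i]=d_1(i-1)e_{i+1}$, which must equal $\lambda_i e_i$. Since $e_{i+1}$ and $e_i$ are linearly independent for every $i$, this forces $\lambda_i e_i = 0$ for all $i \neq 0,1$, so $\lambda_i = 0$ and therefore $\Delta(e_i)=0$ for all $i\in\mathbb{Z}$; the cases $i=0,1$ are given.

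The main obstacle here is essentially bookkeeping: one must be careful that $a_{x,y}$ is a priori an arbitrary finite linear combination of the $e_k$ (including possibly negative indices, since we are in the full Witt algebra), and check that the kernel computations $\ker(\operatorname{ad} e_0)$ and $\ker(\operatorname{ad} e_1)$ on $W$ really are just $\mathbb{C}e_0$ and $\mathbb{C}e_1$ respectively — which follows immediately from the explicit multiplication rule $[e_k,e_j]=(j-k)e_{k+j}$ since the coefficients $(j-k)$ vanish only on the diagonal. I do not expect any genuine difficulty beyond this; the two-step argument using the two anchor elements $e_0$ and $e_1$ closes the lemma cleanly.
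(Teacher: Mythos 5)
Your proof is correct and follows essentially the same route as the paper: for each fixed $i\neq 0,1$ you use the pair $(e_0,e_i)$ to show $a_{e_0,e_i}\in\mathbb{C}e_0$ and hence $\Delta(e_i)\in\mathbb{C}e_i$, then the pair $(e_1,e_i)$ to show $a_{e_1,e_i}\in\mathbb{C}e_1$ and hence $\Delta(e_i)\in\mathbb{C}e_{i+1}$, and conclude from linear independence of $e_i$ and $e_{i+1}$. The kernel computations and the comparison step match the paper's argument exactly (your coefficient $(i-1)$ even corrects a harmless sign slip in the paper's $(1-i)$).
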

\begin{proof} Let $i\in\mathbb{Z}$ be a fixed index except
$0,1$. There exists an element $a_{e_{0},e_{i}}\in W$ such that
$$\Delta(e_{0})=[a_{e_{0},e_{i}},e_{0}],\ \ \ \Delta(e_{i})=[a_{e_{0},e_{i}},e_{i}].$$
Then

$$0=\Delta(e_{0})=[a_{e_{0},e_{i}},e_{0}]=\left[\sum\limits_{j\in\mathbb{Z}}\alpha_{j}e_{j},e_{0}\right]=\sum\limits_{j\in\mathbb{Z}}\alpha_{j}je_{j}.$$
Thus  $\alpha_{j}=0$ for all $j\in \mathbb{Z}$ with $j \neq 0.$
This means that $a_{e_{0},e_{i}}=\alpha_{0}e_{0}.$
Thus
$$
\Delta(e_{i})=[a_{e_{0},e_{i}},e_{i}]=[\alpha_{0}e_{0},e_{i}]=i\alpha_{0}e_{i},
$$
i.e.,
\begin{equation}\label{yb1}
\Delta(e_{i})=i\alpha_{0}e_{i}.
\end{equation}
Now take an element $a_{e_{1},e_{i}}\in W$ such that
$$\Delta(e_{1})=[a_{e_{1},e_{i}},e_{1}],\ \ \ \Delta(e_{i})=[a_{e_{1},e_{i}},e_{i}].$$
Then
$$
0=\Delta(e_{1})=[a_{e_{1},e_{i}},e_{1}]=\left[\sum\limits_{j\in\mathbb{Z}}\beta_{j}e_{j},e_{1}\right]=
\sum\limits_{j\in\mathbb{Z}}\beta_{j}(1-j)e_{j},
$$ and therefore  $\beta_{j}=0$ for any $j\in \mathbb{Z},\ j\neq 1.$
This means that $a_{e_{1},e_{i}}=\beta_{1}e_{1}.$
Hence
 $$\Delta(e_{i})=[a_{e_{1},e_{i}},e_{i}]=[\beta_{1}e_{1},e_{i}]=(1-i)\beta_{1}e_{i+1},$$
i.e.,
\begin{equation}\label{yb2}
 \Delta(e_{i})=(1-i)\beta_{1}e_{i+1}.
\end{equation}
Taking into account that $i\neq 0,1,$ and comparing (\ref{yb1}) and (\ref{yb2}) we obtain that $\alpha_{0}=\beta_{1}=0,$ i.e.,
$$\Delta(e_{i})=0.$$
The proof is complete.
\end{proof}
\begin{lemma}\label{lem13}
 \textit{Let $\Delta$ be a 2-local derivation on $W$ such that $\Delta(e_{i})=0$ for all $i\in\mathbb{Z}.$ Then $\Delta\equiv0$}.
\end{lemma}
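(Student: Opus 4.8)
The plan is to prove $\Delta(x)=0$ for an arbitrary $x\in W$; then $\Delta\equiv 0$ follows. Write $x=\sum_{i=p}^{q}\lambda_i e_i$ as a finite linear combination, where if $x\neq 0$ we may assume $\lambda_p\neq 0$ and $\lambda_q\neq 0$ (the case $p=q$ being allowed).

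First I would use the pair $(x,e_j)$ for an arbitrary index $j\in\mathbb{Z}$. By definition of a 2-local derivation there is an element $a_{x,e_j}=\sum_{k}\mu_k e_k\in W$ with $\Delta(x)=[a_{x,e_j},x]$ and $\Delta(e_j)=[a_{x,e_j},e_j]$. Since $\Delta(e_j)=0$ by hypothesis, the relation $0=[a_{x,e_j},e_j]=\sum_{k}\mu_k(j-k)e_{k+j}$ forces $\mu_k=0$ for every $k\neq j$, because the $e_{k+j}$ are pairwise distinct. Hence $a_{x,e_j}=\mu_j e_j$ for some scalar $\mu_j$, and therefore
\[
\Delta(x)=[\mu_j e_j,x]=\mu_j\sum_{i=p}^{q}\lambda_i(i-j)e_{i+j}.
\]

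Next I would compare this identity for two carefully chosen indices. Since $\mathbb{Z}\setminus\{p,q\}$ is infinite, choose distinct $j_1,j_2\in\mathbb{Z}$ with $j_1,j_2\notin\{p,q\}$. For such a $j$ the coefficient of $e_{p+j}$ in $[e_j,x]$ equals $\lambda_p(p-j)\neq 0$, so $[e_j,x]\neq 0$ and the least index occurring in its support is exactly $p+j$. Now suppose $\Delta(x)\neq 0$. Then $\Delta(x)=\mu_{j_1}[e_{j_1},x]=\mu_{j_2}[e_{j_2},x]$ with $\mu_{j_1}\neq 0\neq\mu_{j_2}$, so the least index in the support of $\Delta(x)$ is simultaneously $p+j_1$ and $p+j_2$, giving $j_1=j_2$, a contradiction. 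Hence $\Delta(x)=0$; and if $x=0$ the identity $\Delta(x)=\mu_j[e_j,x]=0$ is immediate. As $x$ was arbitrary, $\Delta\equiv 0$.

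The argument is essentially a support/degree-counting trick. The only points needing care are the reduction $a_{x,e_j}=\mu_j e_j$ (i.e.\ that the centralizer of $e_j$ in $W$ is the line $\mathbb{C}e_j$) and the observation that for $j\notin\{p,q\}$ the element $[e_j,x]$ really does contain $e_{p+j}$ (equivalently $e_{q+j}$) in its support; neither presents a serious obstacle, so I do not expect any essential difficulty here.
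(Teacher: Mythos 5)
Your proof is correct and follows essentially the same strategy as the paper: you reduce $a_{x,e_j}$ to a scalar multiple of $e_j$ via the centralizer computation and then compare the two resulting expressions $\Delta(x)=\mu_j[e_j,x]$ for two choices of $j$ by a support argument. The paper takes $j=0$ together with one large $j=n>2n_x$ so that the two supports are disjoint, whereas you take two distinct indices outside $\{p,q\}$ and compare the minimal degree of the support; these are minor variants of the same degree-counting idea.
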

\begin{proof}  Take an arbitrary element
$x=\sum\limits_{j\in\mathbb{Z}}c_{j}e_{j}.$ Let \(n_x\) be an
index such that $c_i=0$ for all $|i|>n_x.$ Then
$x=\sum\limits_{j=-n_x}^{n_x}c_{j}e_{j}.$  There exists an element
$a_{e_{0},x}\in W$ such that
$$\Delta(e_{0})=[a_{e_{0},x},e_{0}],\ \ \ \Delta(x)=[a_{e_{0},x},x].$$
Since $0=\Delta(e_0)=[a_{e_{0},x},e_{0}],$ it follows that
$$a_{e_{0},x}=\alpha_{0}e_{0}.$$
Therefore
\begin{equation}\label{aabbb}
\Delta(x)=[a_{e_{0},x},x]=\left[\alpha_{0}e_{0},\sum\limits_{j=-n_x}^{n_x}
c_{j}e_{j}\right]=-\sum\limits_{j=-n_x}^{n_x}c_{j}\alpha_{0}je_{j}.
\end{equation}
Now take an index $n>2n_x.$  Let  $a_{e_{n},x}\in W$ be an element
such that
$$
\Delta(e_n)=[a_{e_n,x},e_n],\ \ \ \Delta(x)=[a_{e_n, x}, x].
$$
By lemma \ref{lem12},  $\Delta(e_n)=0.$ Since
$$0=\Delta(e_{n})=[a_{e_{n},x},e_{n}],$$
it follows that
$$a_{e_{n},x}=\alpha_{n}e_{n}.$$
Then
\begin{equation}\label{aabb}
\Delta(x)=[a_{e_{n},x},x]=\sum\limits_{i=-n_{x}}^{n_{x}}[\alpha_{n}e_{n},c_{i}e_{i}]=
\sum\limits_{-n_{x}}^{n_{x}}(n-i)\alpha_{n}c_{i}e_{n+i}.
\end{equation}
Since $n>2n_{x},$  it follows that $n+i>n_{x}$ for all  $i\in
\{-n_{x}, \ldots, n_{x}\}.$ Thus comparing (\ref{aabbb}) and (\ref{aabb}) we
obtain that $\alpha_{n}=0.$ This means that $\Delta(x)=0.$ The
proof is complete.
\end{proof}

Now we are in position to prove Theorem \ref{thm11}.

\textit{Proof of Theorem} \ref{thm11} Let $\Delta$ be a 2-local derivation on \(W\). Take a derivation $D_{e_0,e_1}$ such that
\begin{equation*}
\Delta(e_0)=D_{e_0,e_1}(e_0)\ \ \text{and} \ \ \Delta(e_1)=D_{e_0,e_1}(e_1).
\end{equation*}
Set $\Delta_1=\Delta-D_{e_0,e_1}.$ Then $\Delta_1$ is a 2-local
derivation such that $\Delta_1(e_0)=\Delta_1(e_1)=0.$ By lemma
\ref{lem12}, $\Delta_1(e_i)=0$ for all $i\in\mathbb{Z}.$ By lemma \ref{lem13}, it
follows that $\Delta_1\equiv0.$ Thus $\Delta=D_{e_0,e_1}$ is a
derivation. The proof is complete.
\hfill$\Box$

\subsection{2-Local derivations on the positive  Witt algebra}

\begin{theorem}\label{thm1}
Any 2-local derivation on  $W^+$ is a derivation.
\end{theorem}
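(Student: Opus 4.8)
The plan is to follow the same three‑step scheme used for the Witt algebra in Theorem~\ref{thm11}, but with the pair $e_1,e_2$ playing the role that $e_0,e_1$ played there, since $e_0\notin W^+$. By Theorem~\ref{thm111} every derivation of $W^+$ equals $L_a$ for some $a=\sum_{j\ge0}\alpha_je_j\in W^++\langle e_0\rangle$, so a $2$-local derivation $\Delta$ on $W^+$ can be reformulated as follows: for every $x,y\in W^+$ there is an element $a_{x,y}\in W^++\langle e_0\rangle$ with $\Delta(x)=[a_{x,y},x]$ and $\Delta(y)=[a_{x,y},y]$. I also record the elementary fact that if $\Delta$ is a $2$-local derivation and $D$ a derivation, then $\Delta-D$ is again a $2$-local derivation (use $D_{x,y}-D$ for the pair $x,y$).

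\emph{First lemma to establish:} if $\Delta$ is a $2$-local derivation on $W^+$ with $\Delta(e_1)=\Delta(e_2)=0$, then $\Delta(e_i)=0$ for all $i\ge1$. Fix $i\ge3$. Applying the defining property to the pair $(e_1,e_i)$ and using $[a_{e_1,e_i},e_1]=\Delta(e_1)=0$, a short computation with the relations $[e_j,e_1]=(1-j)e_{j+1}$ forces $a_{e_1,e_i}=\alpha e_1$ for some scalar $\alpha$ (the $e_0$-component is killed as well); hence $\Delta(e_i)=[\alpha e_1,e_i]=\alpha(i-1)e_{i+1}$. The same argument with $(e_2,e_i)$ gives $a_{e_2,e_i}=\beta e_2$ and $\Delta(e_i)=\beta(i-2)e_{i+2}$. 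Since $e_{i+1}\ne e_{i+2}$ and $i-1,i-2\ne0$ for $i\ge3$, comparing the two expressions yields $\alpha=\beta=0$, so $\Delta(e_i)=0$ (the cases $i=1,2$ being given).

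\emph{Second lemma to establish:} if $\Delta$ is a $2$-local derivation on $W^+$ with $\Delta(e_i)=0$ for all $i\ge1$, then $\Delta\equiv0$. Take $x=\sum_{j=1}^{n_x}c_je_j$. Applying the property to $(e_1,x)$ and using $[a_{e_1,x},e_1]=0$ as above gives $a_{e_1,x}=\alpha e_1$ and therefore $\Delta(x)=\alpha\sum_j c_j(j-1)e_{j+1}\in\mathrm{span}\{e_2,\dots,e_{n_x+1}\}$. Now pick any $N>n_x$; since $\Delta(e_N)=0$, the relation $[a_{e_N,x},e_N]=0$ forces $a_{e_N,x}=\gamma e_N$, whence $\Delta(x)=\gamma\sum_j c_j(j-N)e_{j+N}\in\mathrm{span}\{e_{N+1},\dots,e_{N+n_x}\}$. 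These two subspaces intersect only in $0$ because $N+1>n_x+1$, so $\Delta(x)=0$. Granting the two lemmas, the theorem follows exactly as in the Witt case: given a $2$-local derivation $\Delta$, choose (by the $2$-local property) a derivation $D$ with $D(e_1)=\Delta(e_1)$ and $D(e_2)=\Delta(e_2)$; then $\Delta-D$ is a $2$-local derivation vanishing on $e_1,e_2$, hence on all $e_i$ by the first lemma, hence identically zero by the second, so $\Delta=D$ is a derivation.

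The only genuine departure from the Witt case, and the point to watch, is the one-sidedness of $W^+$: there is no negative part to absorb the inner‑derivation ambiguity, which is why one normalizes against $e_1,e_2$ rather than $e_0,e_1$, and why in the second lemma one exploits that bracketing $x$ with a sufficiently high $e_N$ throws $\Delta(x)$ into arbitrarily large degrees, incompatible with the bounded‑degree formula coming from the pair $(e_1,x)$. The underlying computations — solving $[a_{x,y},e_k]=0$ for $a_{x,y}$ while keeping track of its possible $e_0$-term — are routine.
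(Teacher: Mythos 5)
Your proposal is correct and follows essentially the same route as the paper: the same two lemmas (normalizing against $e_1,e_2$, then killing $\Delta$ on an arbitrary $x$ by pairing with $e_1$ and with a high $e_N$), the same reduction via Theorem~\ref{thm111}, and the same final subtraction of $D_{e_1,e_2}$. The only cosmetic differences are that you argue directly with the elements $a_{x,y}\in W^++\langle e_0\rangle$ rather than with the explicit derivation formula, and you use the slightly sharper bound $N>n_x$ where the paper takes $m>2n_x$.
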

For the proof of this Theorem we need several Lemmas.

\begin{lemma}\label{lem1}
Let $\Delta$ be a 2-local derivation
on \(W^+\) such that
$$
\Delta(e_1)=\Delta(e_2)=0.
$$
Then $\Delta(e_{j})=0$ for all $j\in \mathbb{N}$.
\end{lemma}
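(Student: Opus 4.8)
The plan is to mirror the proof of Lemma \ref{lem12}, with the single generator $e_0$ of the Witt algebra replaced by the pair of generators $e_1, e_2$ of $W^+$, and with Theorem \ref{thm111} supplying the spatial form $L_a$ of the derivations that witness the 2-local property. Since $\Delta(e_1)=\Delta(e_2)=0$ by hypothesis, it suffices to treat a fixed index $j\geq 3$.

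First I would apply the 2-local property to the pair $(e_1,e_j)$. By Theorem \ref{thm111} there is an element $a=\sum_{k\geq 0}\alpha_k e_k\in W^++\langle e_0\rangle$ with $\Delta(e_1)=[a,e_1]$ and $\Delta(e_j)=[a,e_j]$. The relation $0=\Delta(e_1)=[a,e_1]=\sum_{k\geq 0}(1-k)\alpha_k e_{k+1}$ forces $\alpha_k=0$ for every $k\neq 1$ (including $k=0$, since $1-0\neq 0$), so $a=\alpha_1 e_1$ and therefore $\Delta(e_j)=[\alpha_1 e_1,e_j]=(j-1)\alpha_1 e_{j+1}$. Repeating the computation for the pair $(e_2,e_j)$ produces $b=\sum_{k\geq 0}\beta_k e_k\in W^++\langle e_0\rangle$ with $\Delta(e_2)=[b,e_2]$ and $\Delta(e_j)=[b,e_j]$; from $0=\Delta(e_2)=[b,e_2]=\sum_{k\geq 0}(2-k)\beta_k e_{k+2}$ one gets $\beta_k=0$ for all $k\neq 2$, hence $b=\beta_2 e_2$ and $\Delta(e_j)=(j-2)\beta_2 e_{j+2}$.

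Finally I would compare the two expressions for $\Delta(e_j)$. Because $j\geq 3$, both $j-1$ and $j-2$ are nonzero, while $e_{j+1}$ and $e_{j+2}$ are linearly independent, so $(j-1)\alpha_1 e_{j+1}=(j-2)\beta_2 e_{j+2}$ forces $\alpha_1=\beta_2=0$, whence $\Delta(e_j)=0$. I do not anticipate a real obstacle: the only bookkeeping point is to confirm that the annihilation of $e_1$ (resp.\ $e_2$) also kills the possible $e_0$-component of the witnessing element, which it does, and the hypothesis range $j\geq 3$ is precisely what keeps the relevant structure constants from vanishing.
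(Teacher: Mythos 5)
Your proof is correct and follows essentially the same route as the paper: pair $e_j$ with $e_1$ and then with $e_2$, use Theorem \ref{thm111} to reduce each witnessing derivation to a single basis multiple ($\alpha_1 e_1$ and $\beta_2 e_2$ respectively), and compare the two resulting expressions $(j-1)\alpha_1 e_{j+1}$ and $(j-2)\beta_2 e_{j+2}$ for $j\geq 3$. The bookkeeping about the $e_0$-component is handled the same way in the paper (there it appears as the coefficient the authors call $a_1$), so nothing is missing.
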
\label{lem2}
 \begin{proof}
By the definition of 2-local derivations, we can find a derivation $D_1$ such that
$$
\Delta(e_1)=D_1(e_1),\ \ \ \ \ \Delta(e_j)=D_1(e_j).$$
Then
$$0=\Delta(e_1)=D_1(e_1)=a_1e_1+\sum\limits_{i\geq3}(2-i)a_ie_i,$$
and therefore
$$a_i=0,\ \ i\neq2.$$
Thus
\begin{equation}\label{a}
\Delta(e_j)=D_1(e_j)=(j-1)a_2e_{j+1},\ \ \ j\geq3.
\end{equation}
Now take a derivation $D_2$ such that
$$
\Delta(e_2)=D_2(e_2)=0,\ \ \ \ \Delta(e_j)=D_2(e_j).
$$
Thus
$$0=\Delta(e_2)=D_2(e_2)=2a_1^{(2)}e_2+a_2^{(2)}e_3+\sum\limits_{i\geq4}(3-i)a_i^{(2)}e_{i+1},$$
i.e.,
$$a_i^{(2)}=0, \ \  i\neq3.$$
This means that
\begin{equation}\label{b}
\Delta(e_j)=(j-2)a_3^{(2)}e_{j+2},\ \ \ j\geq3.
\end{equation}
Comparing \eqref{a} and \eqref{b} we obtain that
$$
(j-1)a_2e_{j+1}=(j-2)a_3^{(2)}e_{j+2},\ \ \ j\geq3.
$$
Thus $a_2=0,$
and therefore $a_i=0$
for all $i.$
So $\Delta(e_j)=0,\ \ j\geq3.$
 The proof is complete.
\end{proof}

\begin{lemma}\label{lem2}
Let $\Delta(e_i)=0$ for all $i\in\mathbb{N}.$ Then $\Delta\equiv0.$
\end{lemma}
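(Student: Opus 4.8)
\textbf{Proof proposal for Lemma \ref{lem2}.}

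The plan is to mimic the structure of the proof of Lemma \ref{lem13} for the full Witt algebra, but being careful about the crucial difference: in $W^+$ there is no index $e_0$, and there are no negatively indexed basis elements, so the ``commute with $e_0$'' trick and the ``use a large index $n$ on both sides'' trick must both be adapted. First I would fix an arbitrary $x = \sum_{j=1}^{n_x} c_j e_j \in W^+$ with $n_x$ the largest index appearing. The goal is to show $\Delta(x) = 0$.

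Next, the idea is to pin down the element $a_{e_1,x}$ coming from the 2-local property applied to the pair $(e_1, x)$. Since $0 = \Delta(e_1) = [a_{e_1,x}, e_1]$ and writing $a_{e_1,x} = \sum_{i\geq 1} \alpha_i e_i$ we get $[a_{e_1,x},e_1] = \sum_{i\geq 1}(1-i)\alpha_i e_{i+1} = \sum_{i\geq 2}(1-i)\alpha_i e_{i+1}$, which forces $\alpha_i = 0$ for all $i \neq 1$. Hence $a_{e_1,x} = \alpha_1 e_1$ and therefore
\begin{equation}\label{first-expr}
\Delta(x) = [\alpha_1 e_1, x] = \sum_{j=1}^{n_x} (j-1)\alpha_1 c_j e_{j+1},
\end{equation}
so in particular the coefficient of $e_2$ in $\Delta(x)$ is zero (the $j=1$ term vanishes), and $\Delta(x)$ is supported in degrees $3, \dots, n_x+1$. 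Now I repeat the argument with the pair $(e_2, x)$: from $0 = \Delta(e_2) = [a_{e_2,x}, e_2]$ and $[\sum_i \beta_i e_i, e_2] = \sum_{i\geq 1}(2-i)\beta_i e_{i+2} = \sum_{i\neq 2}(2-i)\beta_i e_{i+2}$ we get $\beta_i = 0$ for all $i \neq 2$, so $a_{e_2,x} = \beta_2 e_2$ and
\begin{equation}\label{second-expr}
\Delta(x) = [\beta_2 e_2, x] = \sum_{j=1}^{n_x} (j-2)\beta_2 c_j e_{j+2},
\end{equation}
which is supported in degrees $1, 3, 4, \dots, n_x+2$ and has zero coefficient on $e_3$ (the $j=1$ term sits in degree $3$ with coefficient $-\beta_2 c_1$, wait — this needs a moment of care, see below).

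Comparing \eqref{first-expr} and \eqref{second-expr}: the left side of \eqref{first-expr} has no $e_{n_x+2}$ component, so matching the top degree of \eqref{second-expr} gives $(n_x - 2)\beta_2 c_{n_x} = 0$; similarly \eqref{second-expr} has no component below degree $3$ except possibly $e_3$ from $j=1$, so matching forces the relevant coefficients. The cleanest route is: if $n_x \geq 3$, comparing the coefficient of $e_{n_x+2}$ gives $\beta_2 c_{n_x}=0$; if $c_{n_x}\neq 0$ then $\beta_2 = 0$, and then \eqref{second-expr} shows $\Delta(x)=0$; feeding $\Delta(x)=0$ back, \eqref{first-expr} is consistent. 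For the low-degree cases ($n_x \leq 2$, i.e.\ $x$ a combination of $e_1, e_2$) one argues directly: \eqref{first-expr} gives $\Delta(x) = \alpha_1 c_2 e_3$ and \eqref{second-expr} gives $\Delta(x) = -\beta_2 c_1 e_3$, and one more application of the 2-local property to a pair like $(e_3, x)$ or $(e_1+e_2, x)$ pins these down to zero. I would organize this as: handle $x \in \mathrm{span}\{e_1, e_2\}$ separately (using a pair involving $e_3$), and for $n_x \geq 3$ use the top-degree comparison between \eqref{first-expr} and \eqref{second-expr} to kill $\beta_2$ when $c_{n_x} \neq 0$, concluding $\Delta(x) = 0$ in all cases.

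The main obstacle I anticipate is precisely the low-degree / boundary cases: because $W^+$ lacks $e_0$, the element $a_{e_1,x}=\alpha_1 e_1$ acts on $e_1$ as zero (it cannot detect the $e_1$-component of $x$ at all via degree), and likewise $a_{e_2,x}=\beta_2 e_2$ kills the $e_2$-component; so the two expressions \eqref{first-expr} and \eqref{second-expr} each only ``see'' a shifted piece of $x$, and the overlap argument that worked so smoothly in Lemma \ref{lem13} (where one could take $n$ arbitrarily large, both positive and negative indices available) is tighter here. The remedy is to bring in a third test element $e_k$ with $k$ large (say $k > n_x + 2$): from $0 = \Delta(e_k) = [a_{e_k,x}, e_k]$ one gets $a_{e_k,x} = \gamma_k e_k$ (the equation $\sum_i (k-i)\gamma_i e_{i+k}=0$ forces $\gamma_i = 0$ for $i \neq k$), hence $\Delta(x) = \sum_{j=1}^{n_x}(j-k)\gamma_k c_j e_{j+k}$ is supported entirely in degrees $> n_x + 2 \geq$ the support of both \eqref{first-expr} and \eqref{second-expr}; comparing forces $\gamma_k = 0$ and thus $\Delta(x) = 0$ outright, uniformly and with no case analysis. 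This is the cleanest finish, and I expect the final write-up to use this single large-$k$ comparison rather than juggling \eqref{first-expr} and \eqref{second-expr} against each other.
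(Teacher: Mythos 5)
Your proposal is correct and, in the form you yourself identify as the clean finish (the single large-$k$ comparison), is essentially the paper's own argument: the paper pins down $a_{e_1,x}=\alpha_1 e_1$ to get a low-degree expression for $\Delta(x)$, then uses the pair $(e_m,x)$ with $m>2n$ to get an expression supported in degrees $>n+1$, and concludes $\Delta(x)=0$ from the disjoint supports, so your detour through $(e_2,x)$ is unnecessary. One small correction: by Theorem~\ref{thm111} the element implementing $D_{e_1,x}$ lies in $W^++\langle e_0\rangle$, so $a_{e_1,x}$ may a priori have an $e_0$-component, which your sum starting at $i\geq 1$ omits --- but $[\alpha_0 e_0,e_1]=\alpha_0 e_1$ forces $\alpha_0=0$, so the conclusion $a_{e_1,x}=\alpha_1 e_1$ (and likewise for $a_{e_k,x}$) is unaffected.
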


\begin{proof} Let  $x=\sum\limits_{k=1}^n x_ke_k$ be a non zero element from $W^+.$
By \eqref{1212} there exists an element $a_{e_1, x}\in W^++\langle e_0\rangle$ such that
$\Delta(e_1)=[a_{e_1, x}, e_1]$ and
$\Delta(x)=[a_{e_1, x}, e_1].$ Since $\Delta(e_1)=0,$ it follows that $a_{e_1, x}=\alpha_1^{(1)}e_1.$
Then
\begin{equation}\label{ab}
\Delta(x)=[a_{e_1, x}, x]=\alpha_1^{(1)}\sum\limits_{k=2}^n (k-1) x_{k}e_{k+1}.
\end{equation}

Now take an number $m$ such that $m> 2n.$
Again by  \eqref{1212} there exists an element $a_{e_m, x}\in W^++\langle e_0\rangle$ such that
$\Delta(e_m)=[a_{e_m, x}, e_m]$ and
$\Delta(x)=[a_{e_m, x}, e_m].$ Since $\Delta(e_m)=0,$ it follows that $a_{e_m, x}=\alpha_{m}^{(m)}e_m.$
Then
\begin{equation}\label{ba}
\Delta(x)=\alpha_{e_m, x}^{(m)} \sum\limits_{k=1}^n (k-m)x_{k}e_{k+m}.
\end{equation}
Taking into account that $m>2n,$ and comparing the right sides of the equalities \eqref{ab} and \eqref{ba}, we obtain that
$\alpha_{e_m, x}^{(m)}=0.$ Thus $\Delta(x)=0.$
The proof is complete.
\end{proof}

Now we are in position to prove Theorem \ref{thm1}.

\textit{Proof of Theorem} \ref{thm1} Let $\Delta$ be a 2-local derivation on $W^+$. Take a derivation $D_{e_1,e_2}$ such that
\begin{equation*}
\Delta(e_1)=D_{e_1,e_2}(e_1)\ \ \text{and} \ \ \Delta(e_2)=D_{e_1,e_2}(e_2).
\end{equation*}
Set $\Delta_1=\Delta-D_{e_1,e_2}.$ Then $\Delta_1$ is a 2-local
derivation such that $\Delta_1(e_1)=\Delta_1(e_2)=0.$ By lemma
\ref{lem1}, $\Delta_1(e_i)=0$ for all $i\in\mathbb{N}.$ By lemma \ref{lem2}, it
follows that $\Delta_1\equiv0.$ Thus $\Delta=D_{e_1,e_2}$ is a
derivation. The proof is complete.
\hfill$\Box$

\subsection{An example of a 2-local derivation which is not a derivation on an infinite-dimensional Lie algebra.}

   Let us consider the following (see \cite{Kha}) so-called {\it thin Lie algebra} $\mathcal{L}$
with a basis \(\{e_n: n\in \mathbb{N}\}\), which is defined by the
following table of multiplications of the basis elements:

 $$[e_1,e_n]=e_{n+1},\ \ \ n\geq 2.$$

 and other products of the basis elements being zero.

\begin{theorem}\label{thm12}
Any derivation $D$ on the algebra  $\mathcal{L}$ has the following
form:
\begin{equation}\begin{split}\label{a232}
D(e_1)&=\sum\limits_{i=1}^n\alpha_ie_i,\\
D(e_2)&=\sum\limits_{i=2}^n\beta_ie_i, \\
D(e_j)&=((j-2)\alpha_1+\beta_2)e_j+\sum\limits_{i=1}^n\beta_{i+2}e_{i+j},\ \ \ j\geq3,
\end{split}\end{equation}
where $\alpha_i, \beta_i\in \mathbb{C},$ $i=1, \ldots, n,$ and $n\in \mathbb{N}.$
\end{theorem}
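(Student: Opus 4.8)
The plan is to exploit the extreme sparseness of the multiplication table of $\mathcal{L}$. Since the only nonzero brackets among basis vectors are $[e_1,e_j]=e_{j+1}$ for $j\ge 2$, the algebra $\mathcal{L}$ is generated by $e_1$ and $e_2$, with $e_j=(\operatorname{ad}e_1)^{j-2}e_2$ for every $j\ge 2$. Hence a derivation $D$ is completely determined by the two values $D(e_1)$ and $D(e_2)$, and the theorem amounts to: (i) propagating these two values to all the $D(e_j)$ via the Leibniz rule, and (ii) identifying the only constraint that this propagation forces, namely that the $e_1$-coefficient of $D(e_2)$ vanishes.

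First I would write $D(e_1)=\sum_{i=1}^{n}\alpha_i e_i$ and $D(e_2)=\sum_{i=1}^{n}\beta_i e_i$; this is legitimate because the images lie in $\mathcal{L}$ and every element of $\mathcal{L}$ is a finite linear combination of basis vectors (choose one $n$ large enough for both and pad with zero coefficients, so that $\beta_k=0$ for $k>n$). The elementary fact used throughout is that $[e_i,e_k]=0$ whenever $i\ge 2$ and $k\ge 2$, together with $[e_1,e_1]=0$; from this it follows at once that $[D(e_1),e_k]=\alpha_1 e_{k+1}$ for every $k\ge 2$. This already explains why the statement imposes no condition on $\alpha_2,\dots,\alpha_n$: those coefficients are never seen by any bracket.

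The computation then proceeds in three steps. Step~1: applying $D$ to $e_3=[e_1,e_2]$ gives $D(e_3)=[D(e_1),e_2]+[e_1,D(e_2)]=\alpha_1 e_3+\sum_{i\ge 2}\beta_i e_{i+1}=(\alpha_1+\beta_2)e_3+\sum_{i\ge 1}\beta_{i+2}e_{i+3}$, which is the asserted form for $j=3$. Step~2: applying $D$ to the relation $[e_2,e_3]=0$ yields $0=[D(e_2),e_3]+[e_2,D(e_3)]$; by Step~1 every basis vector occurring in $D(e_3)$ has index $\ge 3$, so $[e_2,D(e_3)]=0$, while $[D(e_2),e_3]=\beta_1 e_4$, whence $\beta_1=0$ and $D(e_2)=\sum_{i=2}^{n}\beta_i e_i$. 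Step~3: an induction on $j\ge 3$ based on $e_{j+1}=[e_1,e_j]$; assuming $D(e_j)$ has the asserted form, one computes $[e_1,D(e_j)]=((j-2)\alpha_1+\beta_2)e_{j+1}+\sum_{i\ge 1}\beta_{i+2}e_{i+j+1}$, adds $[D(e_1),e_j]=\alpha_1 e_{j+1}$, and reads off that the coefficient of $e_{j+1}$ becomes $(j-1)\alpha_1+\beta_2=((j+1)-2)\alpha_1+\beta_2$ while the tail reproduces itself, which is exactly the claim for $j+1$.

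There is no serious obstacle here; the argument is essentially bookkeeping. The two places that need a little care are, first, stating the inductive hypothesis with the correct coefficient $(j-2)\alpha_1+\beta_2$ (linear in $j$, not constant), and second, checking that once $\beta_1=0$ is known the remaining relations $[e_i,e_k]=0$ with $i,k\ge 2$ impose nothing further, so that $\alpha_1,\dots,\alpha_n$ and $\beta_2,\dots,\beta_n$ are genuinely free. If the converse were also wanted, namely that every assignment of the form \eqref{a232} is a derivation, it would suffice to verify the Leibniz rule on the pairs $(e_1,e_j)$ with $j\ge 2$ and $(e_i,e_k)$ with $i,k\ge 2$, both of which are immediate from the displayed formulas.
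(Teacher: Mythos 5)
Your proposal is correct and follows essentially the same route as the paper: write out $D(e_1)$ and $D(e_2)$, compute $D(e_3)=D([e_1,e_2])$, extract $\beta_1=0$ from the relation $[e_2,e_3]=0$, and then induct on $j$ via $e_{j+1}=[e_1,e_j]$. Your explicit observation that $[e_i,e_k]=0$ for $i,k\ge 2$ (so that $[D(e_1),e_k]=\alpha_1e_{k+1}$ and the coefficients $\alpha_2,\dots,\alpha_n$ are unconstrained) is a slightly cleaner packaging of the same bookkeeping the paper carries out term by term.
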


\begin{proof}  Let $D$ be a derivation on $\mathcal{L}.$  We set
$$D(e_1)=\sum\limits_{i=1}^n\alpha_ie_i,\ \ D(e_2)=\sum\limits_{i=1}^n\beta_ie_i.$$
We have
\begin{equation*}\begin{split}
D(e_3)&=D([e_1,e_2])=[D(e_1),e_2]+[e_1,D(e_2)]=\\
&=\left[\sum\limits_{i=1}^n\alpha_ie_i,e_2\right]+\left[e_1,\sum\limits_{i=1}^n\beta_ie_i\right]=\\
&=(\alpha_1+\beta_2)e_3+\sum\limits_{i=1}^n\beta_{i+2}e_{i+3}.
\end{split}\end{equation*}
Using $[e_2,e_3]=0,$ we have
\begin{equation*}\begin{split}
0=D([e_2,e_3])&=[D(e_2),e_3]+[e_2, D(e_3)]=\left[\sum\limits_{i=1}^n\beta_ie_i,e_3 \right]+\\
&+\left[e_2,(\alpha_1+\beta_2)e_3+\sum\limits_{i=1}^n\beta_{i+2}e_{i+3}\right]=\beta_1e_4,
\end{split}\end{equation*}
Thus $\beta_1=0.$
Further
\begin{equation*}\begin{split}
D(e_4)&=D([e_1,e_3])=[D(e_1),e_3]+[e_1,D(e_3)]=\\
&=\left[\sum\limits_{i=1}^n\alpha_ie_i, e_3\right]+\left[e_1,(\alpha_1+\beta_2)e_3+\sum\limits_{i=1}^n\beta_{i+2}e_{i+3}\right]=\\
&=(2\alpha_1+\beta_2)e_4+\sum\limits_{i=1}^n\beta_{i+2}e_{i+4}.
\end{split}\end{equation*}

With similar arguments applied to the products $[e_1,e_j]=e_{j+1}$ and by the induction on $j$, it is
easy to check that the following identities hold for $j\geq3$:
$$
D(e_j)=((j-2)\alpha_1+\beta_2)e_j+\sum\limits_{i=1}^n\beta_{i+2}e_{i+j},\ \ \ j\geq3.
$$
The proof is complete.
\end{proof}

\begin{theorem}
Let  $\mathcal L$ be the thin Lie algebra. Then $\mathcal{L}$ admits a 2-local derivation which is not a
derivation.
\end{theorem}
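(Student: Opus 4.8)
The plan is to write down an explicit map $\Delta$ that annihilates the two generators $e_1,e_2$ but is not the zero map; since derivations are linear and determined by their values on $e_1,e_2$, such a $\Delta$ cannot be a derivation, and the only real work is to verify $2$-locality from the classification of $\mathrm{Der}(\mathcal{L})$ in Theorem \ref{thm12}.

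First I would fix notation. Let $\mathcal{L}_0=\mathrm{span}\{e_n:n\ge 2\}$, the abelian ideal on which $e_1$ acts by the shift $e_n\mapsto e_{n+1}$. For $z=\sum_{j\ge 2}z_je_j\in\mathcal{L}_0$ let $D_z$ be the linear map with $D_z(e_1)=0$ and $D_z(e_n)=\sum_{j\ge 2}z_je_{n+j-2}$ for $n\ge 2$; by Theorem \ref{thm12} (the case $\alpha_i=0$, $\beta_j=z_j$) this is a derivation, the assignment $z\mapsto D_z$ is linear, and a one‑line computation gives the symmetry
$$D_z(x)=\sum_{i,j\ge 2}z_ix_j\,e_{i+j-2}=D_x(z),\qquad x,z\in\mathcal{L}_0 .$$
For $u\in\mathcal{L}_0$ let $Q_u$ be the derivation with $Q_u(e_1)=u$, $Q_u(e_n)=0$ for $n\ge 2$ (Theorem \ref{thm12} with $\alpha_1=0$, $\alpha_j=u_j$, $\beta=0$); note $Q_u(v)=v_1u$ for every $v=\sum_k v_ke_k$. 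Writing $\pi(v)=v-v_1e_1\in\mathcal{L}_0$, I define
$$\Delta(v)=\begin{cases}v_1^{-1}\,D_{\pi(v)}\!\big(\pi(v)\big),& v_1\ne 0,\\[1mm]0,& v_1=0.\end{cases}$$
Then $\Delta$ is homogeneous of degree one and $\Delta(v)\in\mathcal{L}_0$ always; moreover $\Delta(e_1)=\Delta(e_2)=0$ while $\Delta(e_1+e_2)=D_{e_2}(e_2)=e_2\ne 0$, so $\Delta$ is not additive and hence not a derivation.

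To prove $2$-locality I would argue by cases on a pair $v,w$. If $v_1=w_1=0$, then $\Delta(v)=\Delta(w)=0$ and the zero derivation works. If $v_1\ne 0$, $w_1=0$ (and symmetrically), take $D=Q_{\Delta(v)/v_1}$; since $Q$ kills $\mathcal{L}_0$, $D(w)=w_1\Delta(v)/v_1=0=\Delta(w)$ and $D(v)=\Delta(v)$. The essential case is $v_1\ne 0\ne w_1$: using homogeneity I may replace $v,w$ by $e_1+x$ and $e_1+y$ with $x,y\in\mathcal{L}_0$ (a derivation fits the rescaled pair iff it fits the original), and then I claim the derivation
$$D=D_{x+y}+Q_{-D_x(y)}$$
does the job. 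Indeed $D(e_1)=-D_x(y)$ and $D|_{\mathcal{L}_0}=D_{x+y}|_{\mathcal{L}_0}$, so by linearity of $z\mapsto D_z$ and the symmetry above,
$$D(e_1+x)=-D_x(y)+D_x(x)+D_y(x)=D_x(x)=\Delta(e_1+x),$$
and likewise $D(e_1+y)=-D_x(y)+D_x(y)+D_y(y)=D_y(y)=\Delta(e_1+y)$. That finishes the verification.

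The genuinely hard part is not this checking but finding the right $\Delta$. The naive Heisenberg‑type guess $v\mapsto (v_2^2/v_1)e_3$ (extended by $0$ on $\mathcal{L}_0$) fails: since no $e_n$ with $n\ge 3$ is central, matching such a value on a pair such as $(e_1+e_2,\,e_1+e_3)$ forces a derivation with infinite support, which does not exist in $\mathcal{L}$. The remedy is to replace the leading coefficient by the full self‑convolution $v_1^{-1}\pi(v)\star\pi(v)$, where $e_i\star e_j=e_{i+j-2}$, i.e. by $v_1^{-1}D_{\pi(v)}(\pi(v))$; this is exactly what keeps the correction term $-D_x(y)$ finite and lets the commutativity of $\star$ close the computation. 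I would also point out that, in contrast with $W$ and $W^{+}$, here the analogue of Lemma \ref{lem2} breaks down — $\Delta$ vanishes on every $e_i$ yet is not identically zero — which is the structural reason the thin algebra behaves differently.
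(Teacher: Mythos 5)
Your proof is correct, but it takes a genuinely different route from the paper's. The paper's example is much simpler: it sets $\Delta(x)=\sum_{i\ge 2}x_ie_i$ when $x_1\ne 0$ and $\Delta(x)=0$ when $x_1=0$; this map coincides with the single fixed derivation $e_1\mapsto 0$, $e_n\mapsto e_n$ $(n\ge 2)$ on the entire set $\{x: x_1\ne 0\}$ and with the zero derivation on its complement, so the only case requiring any work is the mixed one $x_1=0\ne y_1$, handled by a derivation supported on $e_1$ (your Case 2 map $Q_u$). Your map $v\mapsto v_1^{-1}D_{\pi(v)}(\pi(v))$ is instead a quadratic, Heisenberg-type self-convolution: it does not agree with any single global derivation on $\{v: v_1\ne 0\}$ (e.g.\ it sends $e_1+e_2\mapsto e_2$ but $e_1+e_3\mapsto e_4$), so its $2$-locality genuinely requires the bilinearity of $(z,x)\mapsto D_z(x)$, the symmetry $D_x(y)=D_y(x)$, and the correction term $Q_{-D_x(y)}$ — all of which you verify correctly against the classification in Theorem~\ref{thm12} (I checked the key identity $D_{x+y}(x)-D_x(y)=D_x(x)$ and the fact that $Q_u$ is a derivation only for $u$ with no $e_1$-component, which your construction respects). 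What the paper's choice buys is brevity; what yours buys is a structurally more interesting example, closer in spirit to the classical nilpotent constructions, together with the correct observation that the analogue of Lemma~\ref{lem2} fails here: both your $\Delta$ and the paper's vanish on every basis vector without being identically zero, which is exactly why the thin algebra behaves differently from $W$ and $W^{+}$.
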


\begin{proof}
For $x=\sum\limits_{i=1}^nx_ie_i\in \mathcal{L}$ set
\begin{equation*}\label{yb4}
\Delta(x) = \begin{cases}
0, & \text{if $x_1=0$,}\\
\sum\limits_{i=2}^nx_ie_i, & \text{if $x_1\neq0$}.
\end{cases}
\end{equation*}

We shall show that  $\Delta$ is a 2-local derivation on
\(\mathcal{L}\), which is not a derivation.

Firstly, we show that $\Delta$ is not a derivation. Take the
elements  $x=e_1+e_2$ and $y=-e_1+e_2.$ We have
$$
\Delta(x+y)=\Delta(2e_2)=0$$ and
$$
\Delta(x)+\Delta(y)=\Delta(e_1+e_2)+\Delta(-e_1+e_2)=2e_2.$$ Thus
$$
\Delta(x+y)\neq\Delta(x)+\Delta(y).$$ So, \(\Delta\) is not
additive, and therefore is not a derivation.

Let us consider the linear maps $D_1$ and $D_2$ on \(\mathcal{L}\)
defined as:
\begin{equation}\label{yb5}
D_1(e_n) = \begin{cases}
\sum\limits_{k=2}^m\alpha_ke_k, & \text{if $n=1$,}\\
0, & \text{if $n\geq2$},
\end{cases}
\end{equation}
where $\alpha_k\in \mathbb{C},$ $k=2, \ldots, m,$ and $m\in \mathbb{N}$ and
\begin{equation}\label{yb6}
D_2(e_n) = \begin{cases}
0, & \text{if $n=1$,}\\
e_n, & \text{if $n\geq2$}.
\end{cases}
\end{equation}
By \eqref{a232}, it follows that both $D_1$ and $D_2$ are derivations
on \(\mathcal{L}.\)

For any $x=\sum\limits_{k=1}^{n_x}  x_ke_k, \,
y=\sum\limits_{k=1}^{n_y}  y_ke_k \in \mathcal{L}$ we need to find
a derivation $D=D_{x,y}$ such that
\begin{equation*}
\Delta(x)=D(x)\ \ \text{and} \ \ \Delta(y)=D(y).
\end{equation*}

It suffices to consider the following three cases.

\textbf{Case 1.} Let $x_1=y_1=0.$ In this case, we take $D\equiv
0,$ because $\Delta(x)=\Delta(y)=0.$

\textbf{Case 2.} Let $x_1=0,\ y_1\neq0.$  In this case we take the
derivation $D_1$ of the form \eqref{yb5} with $\displaystyle \alpha_1=0,
\alpha_k=\frac{y_k}{y_1},\) \(2\leq k\leq n_y.\) Then
$$\Delta(x)=0=D_1(x)$$ and
$$\Delta(y)=\sum\limits_{k=2}^{n_y} y_ke_k=y_1\sum\limits_{k=2}^{n_y}\frac{y_k}{y_1}e_k=D_1(y).$$
So, $D$ is a derivation such that $\Delta(x)=D(x),\ \Delta(y)=D(y).$

\textbf{Case 3.} Let $x_1\neq0,\ y_1\neq0.$ In this case we take the derivation $D_2$ of the form~\eqref{yb6}. Then
$$\Delta(x)=\sum\limits_{k=2}^{n_x}x_ke_k=D_2(x)$$
and
$$\Delta(y)=\sum\limits_{k=2}^{n_y}y_ke_k=D_2(y).$$

Therefore in all cases we constructed a derivation on \(\mathcal{L}\) such that $\Delta(x)=D(x),\ \Delta(y)=D(y),$ i.e. $\Delta$ is a 2-local derivation which is not a derivation.
The proof is complete.
\end{proof}

\end{document}